\documentclass[11pt]{article}
\usepackage[margin=1.1in]{geometry}
\usepackage{amsmath,amssymb,amsthm}
\usepackage{booktabs}
\usepackage{multirow}
\usepackage{microtype}
\usepackage{tikz}
\usepackage{float}
\usepackage[colorlinks,citecolor=blue,linkcolor=blue,urlcolor=blue]{hyperref}
\usepackage{flafter}
\usepackage{placeins}

\definecolor{jhull}{RGB}{143,182,217}

\theoremstyle{plain}
\newtheorem{theorem}{Theorem}
\newtheorem{proposition}[theorem]{Proposition}
\newtheorem{corollary}[theorem]{Corollary}

\newcommand{\midfill}{{\color{black!40}%
  \leaders\hrule height 0.55ex depth -0.5ex\hfill}}
\newlength{\pstarw}
\newlength{\pstarh}
\newcommand{\spanlabel}[1]{%
  \makebox[\pstarw]{\midfill\hspace{0.45em}#1\hspace{0.45em}\midfill}}

\newcommand{\Sph}{\mathbb{S}}
\newcommand{\conv}{\operatorname{conv}}
\newcommand{\Aut}{\operatorname{Aut}}

\title{Non-convex unit-edge polytopes on\\
       kissing configurations in dimensions $5$--$7$}
\author{Matthew Self\thanks{Emerald Hills, California, USA.
  \texttt{matthew@mself.com}. ORCID 0009-0003-1103-3967.}}
\date{September 11, 2026}

\begin{document}
\maketitle

\begin{abstract}
All nine known conjecturally optimal non-lattice kissing configurations in
dimensions $5$, $6$, and $7$ are the vertex sets of polytopes with only unit edges.
Eight of the nine are \emph{non-convex}, and the contact polytopes --- the
convex hulls of the same points --- have longer edges.
The edges of the unit-edge polytopes are exactly the contacts of the
configuration.
All but two of the nine are constructed from the lattice contact polytope
in the same dimension by splitting some of its facets and reassembling the
pieces. This is a geometric construction, distinct from the constructions
by layers.
The facets that fold when split are consecutive members of the Gosset
series $k_{21}$, and a split in dimension $n$ folds to the inner product
$1/(10-n)$: $1/5$, $1/4$, or $1/3$. These are the inner products by which
the contact polytopes differ from the lattice one.
All 12 unit-edge polytopes, the nine and the three lattice ones, are
\emph{creased}, a class we define that extends convexity by admitting
shallow folds that reach no more than halfway. Each is the only creased unit-edge polytope on its vertex set.
Every statement is certified in exact arithmetic.
\end{abstract}

\noindent\textbf{Keywords.} Kissing configurations; spherical codes;
non-convex polytopes; unit-edge polytopes.

\noindent\textbf{MSC 2020.} 52C17, 52B11, 52B55, 52B70.

\FloatBarrier

\section{Introduction}\label{sec:intro}

This paper began with a surprise. While running a search to enumerate
kissing configurations assembled exclusively from unit-edge facets, we found
$Q_5$~\cite{Szollosi2023} among the
results --- a configuration whose contact polytope includes edges of length $\sqrt{8/5}$.
How could a search that explores only unit edges produce a configuration that has
non-unit edges?

The answer is that a set of points can carry more than one polytope if
non-convex polytopes are included. $Q_5$ \emph{does} carry a unit-edge polytope
on its vertices, but that polytope is \emph{non-convex} and distinct from its convex hull.

\begin{figure}
\centering
\begin{tikzpicture}[scale=1.0,line join=round,line cap=round]
  \filldraw[fill=jhull!90!black,draw=black!78,line width=0.4pt] (-1.518,0.115) -- (-0.843,-1.695) -- (-0.843,1.728) -- cycle;
  \filldraw[fill=jhull!44!black,draw=black!78,line width=0.4pt] (-0.843,-1.695) -- (0.337,-0.691) -- (-0.337,-1.020) -- cycle;
  \filldraw[fill=jhull!66!black,draw=black!78,line width=0.4pt] (0.843,-1.728) -- (0.843,1.695) -- (1.855,0.049) -- cycle;
  \filldraw[fill=jhull!47!black,draw=black!78,line width=0.4pt] (-0.843,1.728) -- (0.337,1.020) -- (-0.337,0.691) -- cycle;
  \filldraw[fill=jhull!69!black,draw=black!78,line width=0.4pt] (-1.855,-0.049) -- (-0.843,-1.695) -- (-0.337,-1.020) -- cycle;
  \filldraw[fill=jhull!95!black,draw=black!78,line width=0.4pt] (-1.855,-0.049) -- (-0.843,1.728) -- (-0.337,0.691) -- cycle;
  \filldraw[fill=jhull!50!black,draw=black!78,line width=0.4pt] (-0.337,-1.020) -- (0.843,-1.728) -- (1.518,-0.115) -- cycle;
  \filldraw[fill=jhull!94!black,draw=black!78,line width=0.4pt] (-1.855,-0.049) -- (-0.337,-1.020) -- (1.518,-0.115) -- cycle;
  \filldraw[fill=jhull!75!black,draw=black!78,line width=0.4pt] (-0.337,0.691) -- (0.843,1.695) -- (1.518,-0.115) -- cycle;
  \filldraw[fill=jhull!63!black,draw=black!78,line width=0.4pt] (-1.855,-0.049) -- (-0.337,0.691) -- (1.518,-0.115) -- cycle;
  \draw[draw=black!50,line width=0.8pt,dash pattern=on 3pt off 2.4pt] (-0.843,-1.695) -- (0.843,-1.728);
  \draw[draw=black!50,line width=0.8pt,dash pattern=on 3pt off 2.4pt] (-0.843,1.728) -- (0.843,1.695);
  \draw[draw=black!50,line width=0.8pt,dash pattern=on 3pt off 2.4pt] (-0.337,-1.020) -- (-0.337,0.691);
  \draw[draw=black!50,line width=0.8pt,dash pattern=on 3pt off 2.4pt] (1.855,0.049) -- (1.518,-0.115);
\end{tikzpicture}
\caption{Jessen's icosahedron overlaid with the edges of its convex hull
(dashed).}
\label{fig:jessen}
\end{figure}

Jessen's icosahedron~\cite{Jessen1967}, shown in Figure~\ref{fig:jessen}, is a useful example.
It is a non-convex polytope whose 12 vertices are
exactly the vertices of its convex hull, an irregular icosahedron.
It has six reflex dihedral angles along which the surface folds inward,
and the 12 faces along them are obtuse isosceles triangles.
$Q_5$'s unit-edge polytope has similar inward folds.

Like Jessen's icosahedron, the non-convex polytopes considered here are tame
apart from their reflex ridges: they have convex facets and the cycle at each
peak goes round once. Each is star-shaped about the center, so none of them
self-intersects and each is a topological sphere
(Proposition~\ref{prop:star}).

The analogy is imperfect in an important respect. Jessen's icosahedron has edges that are
\emph{longer} than any in its convex hull, whereas in our case it is the \emph{convex hull}
that has edges longer than any in the non-convex polytope.

A key result is that the unit-edge polytopes are \emph{simpler} than their convex hulls:
they have fewer edges, facets, and facet types.
Their edges are also \emph{exactly} the contacts of the configuration: every edge
joins two touching spheres, and every touching pair is an
edge (Theorem~\ref{thm:contacts}).
On this reading the non-convex polytope is the primitive object and the convex
hull the derived one, obtained from it by filling in the folds.

Splitting and reassembling the facets of the lattice contact polytope is a
geometric construction of seven of the nine non-lattice configurations,
and it is
distinct from the constructions by layers, in which a configuration is
built by stacking and coloring layers of a lower-dimensional one --- as in
Cohn, Jiao, Kumar, and Torquato's classification in dimensions 5, 6,
and 7~\cite{CJKT2011} and Cohn and Rajagopal's analysis of
$Q_5$~\cite{CohnR2024}.

The 12 configurations are the lattice ones $D_5$, $E_6$, and
$E_7$~\cite{ConwaySloane1999}, together with nine others in the same three
dimensions: $L_5$~\cite{Leech1967}, $Q_5$~\cite{Szollosi2023}, and
$R_5$~\cite{CohnR2024} in dimension 5, and $C6\text{-}72b/c/d$ and
$C7\text{-}126b/c/d$ in dimensions 6 and 7, whose labels are those of
Cohn, Jiao, Kumar, and Torquato~\cite{CJKT2011}. The first non-lattice
packings in dimensions 6 and 7 are Leech's~\cite{Leech1969}. Of the
latter six, $C6\text{-}72b/c/d$ and $C7\text{-}126c$ are the kissing
configurations of packings described by Conway and
Sloane~\cite{ConwaySloane1995}, and
$C7\text{-}126b$ and $C7\text{-}126d$ were found by Cohn, Jiao, Kumar, and
Torquato. $Q_5$ had appeared earlier, unrecognized, as the kissing
configuration of a $2$-periodic packing of Andreanov and
Kallus~\cite{AndreanovKallus2020,CohnR2024}. The kissing number in these
three dimensions is known only conjecturally: the bounds are $40$--$44$,
$72$--$77$, and $126$--$134$~\cite{Boyvalenkov2015,deLaat2024,Vallentin2025},
and the conjectured optima are not unique.

\paragraph{Key results.}
For each configuration $V$, its \emph{contact polytope} $P$ is the
convex hull of its points, and $P^{*}$ is the unit-edge polytope on the
same points. Table~\ref{tab:all} gives, for each configuration, whether
$P^{*}$ is distinct from $P$, and where it is, the numbers of edges,
facets and facet types of $P^{*}$ compared to those of $P$.
The final column lists the inner products that occur among the edges of
$P$ but not of $P^{*}$: those of the non-unit edges.

\begin{table}
\centering\small
\setlength{\tabcolsep}{6pt}
\renewcommand{\arraystretch}{1.1}
\settowidth{\pstarw}{Edges\hspace{2\tabcolsep}Facets\hspace{2\tabcolsep}Types}
\settowidth{\pstarh}{$P^{*}$ (unit-edge polytope)}
\ifdim\pstarh>\pstarw \setlength{\pstarw}{\pstarh}\fi
\begin{tabular}{cclrrrrrrrc}
\toprule
 & & & & \multicolumn{3}{c}{$P^{*}$ (unit-edge polytope)}
      & \multicolumn{4}{c}{$P$ (contact polytope)} \\
\cmidrule(lr){5-7}\cmidrule(lr){8-11}
$n$ & $|V|$ & Config.\ & $\lvert\Aut V\rvert$
   & Edges & Facets & Types & Edges & Facets & Types
   & Extra $\langle u,v\rangle$ \\
\midrule
\multirow{4}{*}{$5$} & \multirow{4}{*}{$40$}
   & $D_5$ & $3840$ & \multicolumn{3}{c}{\spanlabel{$P^{*} = P$}} & $240$ & $42$ & $2$ & --- \\
 & & $L_5$ & $384$ & \multicolumn{3}{c}{\spanlabel{$P^{*} = P$}} & $240$ & $50$ & $3$ & --- \\
 & & $Q_5$ & $240$ & $240$ & $62$ & $4$ & $250$ & $92$ & $5$ & $1/5$ \\
 & & $R_5$ & $48$ & $240$ & $70$ & $5$ & $250$ & $100$ & $6$ & $1/5$ \\
\midrule
\multirow{4}{*}{$6$} & \multirow{4}{*}{$72$}
   & $E_6$ & $103680$ & \multicolumn{3}{c}{\spanlabel{$P^{*} = P$}} & $720$ & $54$ & $1$ & --- \\
 & & $C6\text{-}72b$ & $3840$ & $720$ & $86$ & $3$ & $736$ & $214$ & $4$ & $1/4$ \\
 & & $C6\text{-}72c$ & $2304$ & $720$ & $150$ & $3$ & $768$ & $502$ & $5$ & $1/4$ \\
 & & $C6\text{-}72d$ & $384$ & $720$ & $118$ & $4$ & $752$ & $358$ & $6$ & $1/4$ \\
\midrule
\multirow{4}{*}{$7$} & \multirow{4}{*}{$126$}
   & $E_7$ & $2903040$ & \multicolumn{3}{c}{\spanlabel{$P^{*} = P$}} & $2016$ & $632$ & $2$ & --- \\
 & & $C7\text{-}126c$ & $103680$ & $2016$ & $686$ & $4$ & $2043$ & $1334$ & $5$ & $1/3$ \\
 & & $C7\text{-}126b$ & $46080$ & $1984$ & $536$ & $4$ & $2176$ & $2376$ & $6$ & $1/4$ \\
 & & $C7\text{-}126d$ & $3840$ & $1984$ & $590$ & $8$ & $2187$ & $2822$ & $14$ & $1/3,\,1/4$ \\
\bottomrule
\end{tabular}
\caption{Summary of results. \emph{Types} counts congruence classes of
facets; $P^{*}$ is convex exactly when $P^{*} = P$; the extra
$\langle u,v\rangle$ are those realized by edges of $P$ and by no edge of
$P^{*}$.
Within each dimension the configurations are ordered by decreasing
$\lvert \Aut V \rvert$.}
\label{tab:all}
\end{table}

\paragraph{Terms.}
\begin{description}
\item[Kissing configuration.] A set $V \subset \Sph^{n-1}$ with
  $\langle u,v\rangle \le \tfrac12$ for all distinct $u,v \in V$. Equivalently,
  it is a set of unit spheres all touching a central unit sphere and having pairwise
  disjoint interiors. Its \emph{contact polytope} is $\conv V$.
\item[Polytope.] Used in the spirit of Coxeter~\cite{Coxeter1973}: a
  finite set of convex facets in which every $(n-2)$-ridge
  belongs to exactly two facets, the facets meeting along any
  $(n-3)$-peak form a single cycle whose spherical angles sum to $2\pi$,
  and the facet set is connected through its ridges.

  Extending this beyond the convex case is
  delicate~\cite{Grunbaum2003}. We require three further properties,
  each automatic for a convex polytope with the center in its interior. Every
  facet spans an $(n-1)$-dimensional affine subspace that does not pass
  through the center. The cycle at a peak goes round the peak exactly once:
  projected to the $2$-plane orthogonal to the peak, the wedges of its
  facets tile the circle with winding number one. Two facets that share a
  ridge do not lie in the same hyperplane.
\item[Unit-edge.] A polytope is \emph{unit-edge} when every edge has
  length~$1$.
\item[$P$ and $P^{*}$.] $P$ is the contact polytope of a kissing
  configuration, and $P^{*}$ the creased unit-edge polytope on the same
  vertex set, which Theorem~\ref{thm:unique} shows is unique. Each
  configuration is named by its contact polytope: $Q_5$ is the convex hull
  of the 40 points, and $Q_5^{*}$ the unit-edge polytope on those same
  vertices. Every point of a sphere is an extreme point of its convex
  hull, so every point of $V$ is a vertex of $\conv V$. In particular
  $\conv P^{*} = P$.
\item[External and internal hyperplanes.] A hyperplane is \emph{external}
  when all of $V$ lies weakly on one side of it, and \emph{internal}
  otherwise. The points an external hyperplane includes are a face of
  $\conv V$.
\item[Obtuse cell.] A cell inscribed in a sphere is \emph{obtuse} when
  its circumcenter is not in the relative interior of its convex hull, as a
  triangle is obtuse when its circumcenter is not inside it. Equivalently,
  the cell lies in a closed hemisphere of its circumsphere: some flat
  through the circumcenter has the whole cell on one closed side, and such
  a flat can be taken through the circumcenter and $n-2$ of the cell's
  own vertices.
\item[Shallow fold.] A facet on an internal hyperplane $H$ is a
  \emph{fold}. It is a \emph{shallow fold} when it consists of exactly the
  points of $V$
  lying in one closed half of $H$, the half bounded by an $(n-2)$-flat
  spanned by the circumcenter of $V \cap H$ and $n-2$ points of
  $V \cap H$. So it reaches at most halfway across the section, and its
  cell is obtuse. When the fold is all of $V \cap H$ the converse holds
  too: it is shallow exactly when its cell is obtuse.
\item[Creased polytope.] A polytope in which every facet on an external
  hyperplane contains every point of $V$ on that hyperplane, and every
  fold is shallow. A convex polytope is creased with no folds.
\item[Vertex-figure split.] A \emph{vertex-figure split}, or \emph{split}
  for short, divides a cell into two pieces along the hyperplane through the
  vertex figure of one of its vertices (the vertex figure is the hull of
  the vertices joined to that one by an edge). The two resulting
  pieces are a pyramid over the vertex figure and a
  diminished cell, which is the rest. When the cutting hyperplane
  passes through the cell's circumcenter we call the split \emph{equatorial}.
\end{description}

\begin{figure}
\centering
\begin{tikzpicture}[scale=1.075,line join=round,line cap=round,
  face/.style={fill=jhull!45!white,draw=black!72,line width=0.3pt},
  vf/.style={fill=jhull!92!black,draw=black!80,line width=0.4pt}]
\begin{scope}[shift={(0.000,0)}]
  \filldraw[vf] (0.6460,-0.0744) -- (0.2876,0.1672) -- (-0.6460,0.0744) -- (-0.2876,-0.1672) -- cycle;
  \filldraw[face] (0.0000,-0.6830) -- (-0.6460,0.0744) -- (-0.2876,-0.1672) -- cycle;
  \filldraw[face] (0.0000,-0.6830) -- (-0.2876,-0.1672) -- (0.6460,-0.0744) -- cycle;
  \filldraw[face] (0.0000,1.2819) -- (-0.6460,0.6733) -- (-0.2876,0.4317) -- cycle;
  \filldraw[face] (0.0000,1.2819) -- (-0.2876,0.4317) -- (0.6460,0.5244) -- cycle;
\end{scope}
\begin{scope}[shift={(2.350,0)}]
  \filldraw[face] (0.0000,-0.7165) -- (0.3318,-0.4692) -- (0.6490,-0.2847) -- cycle;
  \filldraw[vf] (0.6061,0.2506) -- (0.4440,0.4481) -- (-0.3318,0.4692) -- (-0.6490,0.2847) -- (-0.0694,0.1497) -- cycle;
  \filldraw[face] (-0.6061,-0.2506) -- (-0.4440,-0.4481) -- (-0.6490,0.2847) -- cycle;
  \filldraw[face] (0.3318,-0.4692) -- (0.6490,-0.2847) -- (0.6061,0.2506) -- cycle;
  \filldraw[face] (0.0000,-0.7165) -- (-0.4440,-0.4481) -- (0.3318,-0.4692) -- cycle;
  \filldraw[face] (-0.6490,0.2847) -- (-0.4440,-0.4481) -- (-0.0694,0.1497) -- cycle;
  \filldraw[face] (-0.0694,0.1497) -- (0.3318,-0.4692) -- (0.6061,0.2506) -- cycle;
  \filldraw[face] (-0.4440,-0.4481) -- (0.3318,-0.4692) -- (-0.0694,0.1497) -- cycle;
  \filldraw[face] (0.0000,1.3154) -- (0.6061,0.8495) -- (0.4440,1.0469) -- cycle;
  \filldraw[face] (0.0000,1.3154) -- (-0.6490,0.8836) -- (-0.0694,0.7485) -- cycle;
  \filldraw[face] (0.0000,1.3154) -- (-0.0694,0.7485) -- (0.6061,0.8495) -- cycle;
\end{scope}
\end{tikzpicture}
\caption{Equatorial and non-equatorial vertex-figure splits in three dimensions.}
\label{fig:split3d}
\end{figure}

\paragraph{Theorems.}
We present the unit-edge polytopes, give their facet censuses, and show
for each of the 12 configurations that such a polytope exists on its
vertices and that it is the only creased one. Each polytope is exhibited
and verified against the definition condition by condition. Every
candidate facet of a creased unit-edge polytope on $V$ is enumerated and
pruned by the ridge condition, and what survives is exactly the facets of
$P^{*}$, so the enumeration both finds $P^{*}$ and proves it unique.
Section~\ref{sec:method} gives the verification and the enumeration in
full.

\begin{theorem}[Existence]\label{thm:exist}
Each of the 12 configurations carries a unit-edge polytope $P^{*}$ on
its vertices. For $D_5$, $L_5$, $E_6$, and $E_7$ the contact polytope is
itself unit-edge, so $P^{*} = P$ and $P^{*}$ is convex. For the other
eight it is not: $P^{*} \neq P$, and $P^{*}$ is not convex.
\end{theorem}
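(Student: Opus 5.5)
The proof is a certificate: for each of the 12 configurations we exhibit an explicit facet list and check the definition of a polytope condition by condition in exact arithmetic. All coordinates are rational or lie in a small quadratic field, so every check reduces to integer or rational computation.

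The first step is the convex cases. For $D_5$, $L_5$, $E_6$, $E_7$ we compute $\conv V$ by exact facet enumeration, exploiting $\Aut V$ so that only orbit representatives of candidate supporting hyperplanes are needed. We then read off its edges and check that every edge has $\langle u,v\rangle = \tfrac12$, i.e.\ length $1$. A convex polytope with the center in its interior satisfies every clause of the definition automatically, so $P^{*}=P$ in these four cases.

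For the other eight, $P$ itself cannot serve. Table~\ref{tab:all} records edges of $P$ with inner product $1/5$, $1/4$ or $1/3$, i.e.\ non-unit lengths, and these we verify directly from the facet enumeration of $\conv V$. We therefore build $P^{*}$ as the paper's construction suggests. Starting from the facets of $P$, we split each facet containing a non-unit edge along vertex figures, or take the facets of $P^{*}$ produced by the enumeration of Section~\ref{sec:method}, and reassemble the pieces so that only unit edges remain. We record the facets up to $\Aut V$, and then verify the following properties of the resulting list, orbit by orbit:
\begin{enumerate}
\item each facet is a convex cell whose affine hull is a hyperplane missing the origin, and all of its edges have length $1$;
\item each ridge lies in exactly two facets, and these two facets are not coplanar;
\item at each peak, projecting to the orthogonal $2$-plane, the facet wedges tile the circle with winding number exactly one, so their angles sum to $2\pi$;
\item the facet graph under ridge adjacency is connected.
\end{enumerate}
Winding number one, rather than just angle sum $2\pi$, is checked by sorting the wedges cyclically by exact angle comparison.

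Non-convexity follows once $P^{*}$ is a valid polytope with unit edges. It has no edge of inner product $1/10-n$ type, while $P$ does, so the two facet sets differ and $P^{*}\ne P$. Moreover $\conv P^{*}=P$, and a convex polytope equals the convex hull of its vertices, so if $P^{*}$ were convex it would equal $P$. Hence $P^{*}$ is not convex. Concretely, we also exhibit a fold: a facet of $P^{*}$ whose hyperplane has points of $V$ strictly on both sides.

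I expect the main obstacle to be the peak condition, and more generally the sheer size of the checks in dimension $7$, where there are hundreds of facets and many more peaks. Handling this relies on orbit reduction under $\Aut V$, which in some cases is only of order $3840$, and on exact angle comparisons without trigonometry. For the latter I would compare the wedge boundary directions via signs of $2\times2$ determinants in the projected plane.
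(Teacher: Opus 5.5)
Your plan is the paper's own proof. You exhibit an explicit facet list, check it clause by clause in exact arithmetic, decide the peak condition by signs of $2\times 2$ determinants, and read off $P^{*}\neq P$ from the non-unit edges that the hull enumeration finds in $P$.

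The one real omission is the paper's check~2: every point of $V$ is a vertex of some facet. Your items 1--4 never look at the points of $V$ that the facets do not use, so a valid unit-edge polytope on a proper subset $W\subsetneq V$ would pass all of them. Check~2 is what makes $P^{*}$ a polytope \emph{on the vertices of $V$}, as the statement asserts. Your non-convexity argument also depends on it twice:
\begin{itemize}
\item $\conv P^{*}=P$ holds only if the vertex set of $P^{*}$ is all of $V$. Otherwise $P^{*}$ could be convex, equal to the boundary of $\conv W$, and still differ from $P$.
\item Your fold witness, points of $V$ strictly on both sides of a facet's hyperplane, shows that $P^{*}$ is non-convex only if those points are vertices of $P^{*}$.
\end{itemize}
Add the coverage check, which is a single pass over the facet list. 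Also add the face-to-face condition that the paper pairs with the ridge count. With these, both of your arguments go through.

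There are smaller points as well.
\begin{itemize}
\item For $C7\text{-}126b$ the extra inner product is $1/4$, not $1/(10-7)=1/3$. So the claim that ``$P$ has an edge of the $1/(10-n)$ type'' is false there. All you need is that $P$ has some non-unit edge, which the hull enumeration supplies.
\item Your first route to the facet list would not produce $P^{*}$. That route splits the non-unit-edge facets of $P$ at vertex figures. But those facets lie on external hyperplanes and are discarded whole, while the facets replacing them are folds on internal hyperplanes (for instance the 20 pyramids over triangular prisms in $Q_5^{*}$).
\item The paper's splits act on facets of the lattice polytope $P_0$, with the pieces reflected to new hyperplanes. For $C7\text{-}126b$, no splitting of the $E_7$ facets works at all. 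Since existence is certified by the checks, only your second route, the enumerated list, matters.
\item In the four convex cases, also record that the center is interior to $\conv V$. The facet inequalities show this at once, and it is what your ``automatically satisfies the definition'' shortcut needs.
\end{itemize}
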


That the contact polytopes of $D_5$, $E_6$, and $E_7$ have all their edges of
length $1$ is classical~\cite{Coxeter1973}. The substance of
Theorem~\ref{thm:exist} is $L_5$ and the eight with $P^{*} \neq P$.

\begin{theorem}[Contacts]\label{thm:contacts}
On each of the 12 configurations the edges of $P^{*}$ are exactly the
contacts of $V$: the $1$-skeleton of $P^{*}$ is the contact graph. For the
eight with $P^{*} \neq P$, the contact polytope has edges that are not
contacts.
\end{theorem}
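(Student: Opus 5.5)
The plan is a finite, exact check, configuration by configuration, using the explicit facet lists of $P^{*}$ from Theorem~\ref{thm:exist}. The two sides of ``edges $=$ contacts'' need different kinds of argument.

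\textbf{Edges are contacts.} This inclusion is immediate. $P^{*}$ is unit-edge, so every edge $\{u,v\}$ has $|u-v|=1$. Since $u,v\in\Sph^{n-1}$, this means $\langle u,v\rangle=1/2$. That is exactly the contact condition for unit spheres centered at $2u$ and $2v$.

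\textbf{Contacts are edges.} For the reverse inclusion I would first compute the contact graph exactly. The points have coordinates in $\mathbb{Q}$ or a small quadratic field after scaling. I would list all pairs with $\langle u,v\rangle=1/2$ and record the counts: $240$ in dimension $5$, $720$ in dimension $6$, and $2016$ or $1984$ in dimension $7$. I would also check that these counts equal the contact numbers, namely $|V|$ times the average degree, for example $12$ per vertex in $D_5$.

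Next I would extract the $1$-skeleton of $P^{*}$. Its edges are the $1$-faces of the convex facets, which can be obtained by computing the face lattice of each facet. I would then compare the two edge sets directly. The edge counts in Table~\ref{tab:all} already agree with the contact counts, so it is enough to show that the skeleton is a subset of the contact set. Equality then follows from the cardinalities.

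The main obstacle is that a contact might lie inside a facet, as a diagonal, rather than along an edge. Set equality rules this out, but it relies on correctly identifying the edges of each facet. To make that step robust, I would use the unit-edge property of each facet: in a convex facet whose vertices all lie on a sphere, a segment of length $1$ between two vertices is an edge unless it is a diagonal. I would check that no unit-length diagonal occurs by comparing, for each facet type, the number of contact pairs among its vertices with its number of edges. This is one computation per facet congruence class, and each is small because of the symmetry group $\Aut V$.

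\textbf{Edges of $P$ that are not contacts.} For the eight configurations with $P^{*}\neq P$, I would exhibit, for each extra inner product listed in Table~\ref{tab:all}, one pair $u,v$ with $\langle u,v\rangle\in\{1/5,1/4,1/3\}$. I would then certify that $[u,v]$ is an edge of $\conv V$ by giving a linear functional $c$ such that $\langle c,u\rangle=\langle c,v\rangle=1$, $\langle c,w\rangle<1$ for all other $w\in V$, and such that no other point of $V$ lies on the line through $u$ and $v$. This is an exact LP certificate. Because the inner product is less than $1/2$, the pair is not a contact, and this proves the final sentence. The edge counts of $P$ exceeding those of $P^{*}$ in Table~\ref{tab:all} are consistent with this, but the certificate itself is the proof.
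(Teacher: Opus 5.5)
Your proposal is correct and matches the paper's proof. Both use the trivial inclusion from unit length and an exact finite comparison of the skeleton of $P^{*}$ with the pairs at inner product $\tfrac12$ (you close it by cardinality, the paper by direct set comparison), and both take the second sentence from non-unit edges of $\conv V$, which you certify with an explicit supporting functional, the same rational LP test that underlies the paper's hull enumeration. One small slip: the number of contacts is $|V|$ times the average degree divided by $2$, for example $40\cdot 12/2=240$ for $D_5$.
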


\begin{theorem}[Creased]\label{thm:creased}
Every one of the 12 unit-edge polytopes of Theorem~\ref{thm:exist} is
creased: its folds are all shallow. On each of them the facets lying on external hyperplanes are
exactly the unit-edge facets of $\conv V$, and the folds are exactly the
facets that $P^{*}$ has and $\conv V$ does not.
\end{theorem}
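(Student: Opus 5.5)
The proof is a finite certification, carried out facet by facet on the explicit polytopes $P^{*}$ produced in the proof of Theorem~\ref{thm:exist}. All 12 configurations have coordinates in a small number field (rationals with $\sqrt2$, $\sqrt3$, or $\sqrt5$ after scaling). Every test below is therefore a sign or equality test on exact algebraic numbers.

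For each facet $F$ of $P^{*}$ I will compute its supporting hyperplane $H=\{x:\langle a,x\rangle=b\}$, with $b\neq0$ by the polytope axioms. I then evaluate $\langle a,v\rangle-b$ for every $v\in V$.

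\emph{External case.} If all the signs agree weakly, $H$ is external. I then check that $F$ contains every point of $V\cap H$, which is the first clause of the definition of creased. Since $V\cap H$ is then a face of $\conv V$ and $F$ has unit edges, $F$ is a unit-edge facet of $\conv V$. For the converse I enumerate the facets of $\conv V$, which are known from the $P$ columns of Table~\ref{tab:all}. I check that each facet whose edges all have length $1$ appears among the facets of $P^{*}$. The check runs over orbit representatives under $\Aut V$, using that $\Aut V$ preserves $P^{*}$; that invariance follows from the uniqueness in Theorem~\ref{thm:unique} or by direct check. For the four with $P^{*}=P$ this part is immediate, and there are no folds.

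\emph{Internal case.} If $H$ is internal, $F$ is a fold, and it is certainly not a facet of $\conv V$, because facets of $\conv V$ lie on external hyperplanes. Together with the external case, this shows that the folds are exactly the facets of $P^{*}$ absent from $\conv V$. To show each fold is shallow I will:
\begin{enumerate}
\item compute the section $S=V\cap H$ and its circumcenter $c$ inside $H$ (all points of $V$ lie on $\Sph^{n-1}$, so $c$ is the projection of the origin onto $H$);
\item exhibit an $(n-2)$-flat $L$ through $c$ and $n-2$ points of $S$;
\item verify that $F$'s vertex set equals exactly the points of $S$ on one closed side of $L$ within $H$.
\end{enumerate}

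The search for $L$ is the main obstacle, since a priori it ranges over $\binom{|S|}{n-2}$ choices. Two cases arise.

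If $F=S$, I will instead use the stated equivalence with obtuseness. I will test whether $c$ lies outside the relative interior of $\conv F$ by solving for barycentric coordinates, or by finding a facet inequality of $\conv F$ violated weakly at $c$.

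If $F\subsetneq S$, the vertex-figure split picture guides the choice of $L$. The fold arises from splitting a facet of the lattice contact polytope, and the cutting hyperplane through a vertex figure meets $H$ in the natural candidate flat. In equatorial splits this flat passes through $c$ automatically. So I will first try $L=\operatorname{aff}(F\cap F')$, where $F'$ is the sibling piece of the split, checking that it contains $c$ and is spanned by $n-2$ of its points. Only if that fails will I fall back to brute force over $(n-2)$-subsets of $F\cap\partial$. The brute force is feasible because there are few fold orbits: at most $8$ types per polytope in Table~\ref{tab:all}.

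Each check is done once per $\Aut V$-orbit of facets, which keeps the exact computation small, especially for $E_7$ and $C7\text{-}126b/c/d$.
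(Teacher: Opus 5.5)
Your proposal is correct and is essentially the paper's proof. Check~7 of Section~\ref{sec:method} classifies each facet's hyperplane as external or internal, requires the whole section on external ones, decides obtuseness when a fold is all of $V\cap H$ and enumerates the flats through the circumcenter and $n-2$ points of $V\cap H$ otherwise, and the second sentence of the theorem is proved by comparing the external facets of $P^{*}$ with the unit-edge facets from the hull enumeration.

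Two cautions. First, the folds that omit points of $V\cap H$ occur only on $C7\text{-}126b^{*}$ and $C7\text{-}126d^{*}$, which are not obtained by splitting facets of $E_7$ (they lie outside Theorem~\ref{thm:parts}), so your sibling-piece guess for $L$ has nothing to work with and the fallback enumeration of flats is the real check. Second, the $\Aut V$-invariance of $P^{*}$ that your orbit reduction needs must be verified directly, because deriving it from Theorem~\ref{thm:unique} presupposes the creasedness of $P^{*}$ that you are proving.
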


\begin{theorem}[Uniqueness]\label{thm:unique}
Each of the 12 configurations carries exactly one creased unit-edge
polytope on its vertices: the $P^{*}$ of Theorem~\ref{thm:exist}.
\end{theorem}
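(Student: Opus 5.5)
The plan is to prove uniqueness by exhaustive enumeration of candidate facets, using only properties that any creased unit-edge polytope on $V$ must have. Existence comes from Theorem~\ref{thm:exist}. The task is to show that any creased unit-edge polytope $P'$ on $V$ has exactly the facets of $P^{*}$.

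\textbf{Step 1: constrain the facets of $P'$.} Since every edge of $P'$ has length $1$, every edge joins two points at inner product $1/2$. So every facet of $P'$ is a convex $(n-1)$-cell all of whose edges are contacts of $V$. Each facet spans a hyperplane $H$ avoiding the center, and every facet vertex lies in $V\cap H$. I would therefore enumerate all affine hyperplanes $H$ spanned by $n$ affinely independent points of $V$ that do not pass through the origin. This is finite and reducible modulo $\Aut V$, taking one orbit representative at a time. For each such $H$, the possible facets on it are then constrained as follows.

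\emph{External $H$.} The creased condition forces the facet to be all of $V\cap H$, that is, the face of $\conv V$. It survives only if every edge of that face has length $1$.

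\emph{Internal $H$.} The facet must be a shallow fold: the points of $V\cap H$ on one closed side of an $(n-2)$-flat through the circumcenter of $V\cap H$ and $n-2$ points of $V\cap H$. This gives finitely many candidates per $H$. Each candidate is kept only if its convex hull has unit edges and the right dimension.

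All of this is exact arithmetic over the rational, or $\sqrt{}$-extended, coordinates of the configurations.

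\textbf{Step 2: prune by the ridge condition.} Every ridge of a facet of $P'$ must lie in exactly one other facet of $P'$, and that facet must not be coplanar with it. I would iterate to a fixed point: repeatedly delete any candidate having a ridge that no other surviving, non-coplanar candidate contains. The key claim, checked by computation, is that the survivors are exactly the facets of $P^{*}$.

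This is enough for uniqueness. The facets of $P'$ form a subset of the survivors that is closed under the ridge condition. Since $P'$ is connected through ridges, it contains some facet of $P^{*}$. From there I would argue that each ridge of $P^{*}$ is contained in exactly two survivors, so the facets of $P'$ adjacent across any ridge are forced, and connectivity propagates this to all of $P^{*}$. If some ridge lies in more than two survivors, a local choice remains. That choice must be resolved by the peak condition (winding number one) or by checking each alternative, which reduces to a small exact-arithmetic branching search.

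\textbf{Main obstacle.} The hard step is keeping the enumeration complete yet tractable, especially for $E_7$-type configurations with $126$ points, where $n$-subsets are astronomically many. The plan is to avoid enumerating $n$-subsets directly. Instead, I would grow candidate facets as unit-edge simplices in the contact graph, i.e.\ cliques, up to $\Aut V$, and take the hyperplanes they span. This loses nothing: every facet has unit edges and contains a spanning simplex, and a convex unit-edge cell can be triangulated by its vertices. The triangulation still needs care, since a spanning simplex of a cell need not have all its edges be edges of the cell, so the unit-length condition on it is not automatic. The rigorous route is to note that any two vertices of a facet of $P'$ lie at inner product at most $1/2$. For each facet vertex I would pick the simplex through its neighbors within the facet, which are unit-edge cliques, and show by exact check that these cliques span the facet's hyperplane. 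The remaining delicacy is the ridge-multiplicity ambiguity in Step 2, which I expect to arise only at folds and to be settled by the peak winding condition.
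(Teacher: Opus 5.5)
\textbf{Overall skeleton.} Your structure matches the paper's proof. The candidates on a hyperplane are the whole section when it is external, and the two closed sides of a circumcenter flat when it is internal. Ridge pruning is iterated to a fixed point, and the survivors are compared with the facets of $P^{*}$. Finally, connectivity shows that a creased unit-edge polytope with all its facets among those of $P^{*}$ contains all of $P^{*}$.

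Your worry about a ridge lying in more than two survivors cannot arise. Once the survivors are exactly the facets of $P^{*}$, each ridge lies in exactly two of them because $P^{*}$ itself satisfies the ridge condition, so no branching or winding argument is needed.

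Your enumeration over all hyperplanes spanned by $n$ points of $V$ is complete in principle. The paper instead builds the candidates as a closure: first the candidates through one point, then those through each ridge of a candidate already found. This is complete because every polytope has a facet through that point and is connected through its ridges.

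\textbf{The gap.} The genuine gap is the step you rely on for tractability. Growing hyperplanes from unit-edge simplices (cliques of the contact graph) does lose facets, because none of the non-simplex facets here contains a spanning clique.
\begin{itemize}
\item In a $16$-cell facet of $D_5$ the only non-edges are the four antipodal pairs. A clique takes at most one point from each pair, so it has at most $4$ points and spans a $3$-flat, while the facet spans a $4$-flat.
\item Likewise the largest cliques of the rectified $5$-cell, the $5$-demicube, and $2_{21}$ (whose $1$-skeleton is the Schl\"{a}fli graph, of clique number $6$) have $4$, $5$, and $6$ points. Each is one dimension short.
\end{itemize}
On an external hyperplane, $V \cap H$ is just the facet's vertex set, so no other clique can span it. Your search would therefore miss every facet of $D_5$ and of $E_6$, and the comparison with $P^{*}$ would fail.

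Your proposed repair fails for the same reason. The neighbors of a vertex within a facet are not pairwise adjacent; in an orthoplex they include antipodal pairs, so they do not form cliques.

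\textbf{The missing idea.} Use connectivity of a facet's edge graph rather than cliques. Extend a flat one point at a time by a point of $V$ joined by a unit edge to some point of $V$ already on the flat. While the flat misses part of a facet, some edge of that facet leaves it, so every hyperplane carrying a facet through the starting face is reached. Starting from a ridge, which already spans an $(n-2)$-flat, a single such step suffices. This is what makes the paper's closure through ridges feasible even for $E_7$.
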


A vertex set can carry more than one polytope, so uniqueness is relative
to a class. The creased polytopes are the smallest natural class that
contains every polytope in this paper. Each facet of a creased polytope
contains every point of $V$ in its region: the whole hyperplane for an
external facet, and half of it for a fold. Convex polytopes are the case
with no folds. The folds a creased polytope admits are obtuse cells,
which is what a vertex-figure split produces
(Section~\ref{sec:crease}). Jessen's icosahedron is creased because
its isosceles faces are obtuse triangles. The class was not chosen to
fit the search, but it does make the search feasible. A shallow fold is
determined by a flat, and the flat by $n-2$ points, so a hyperplane with
$m$ points carries at most $2\binom{m}{n-2}$ candidate folds rather than
$2^{m}$ subsets. We know of no
unit-edge polytope on any of the 12 vertex sets that is not creased.

\begin{theorem}[Splits]\label{thm:parts}
Let $P^{*}$ be the unit-edge polytope of one of the non-lattice
configurations $L_5$, $Q_5$, $R_5$,
$C6\text{-}72b$, $C6\text{-}72c$, $C6\text{-}72d$, $C7\text{-}126c$, and let
$P_0$ be the contact polytope of the lattice configuration in the same
dimension. Every facet of $P^{*}$ is congruent to a facet of $P_0$, or to a
piece of one produced by a vertex-figure split. A facet may be split more
than once.
\end{theorem}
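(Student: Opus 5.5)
The proof is a finite certified check. Existence and uniqueness of $P^{*}$ (Theorems~\ref{thm:exist} and~\ref{thm:unique}) give each of the seven polytopes as an explicit list of facets with exact coordinates. For the lattice contact polytopes $P_0$ of $D_5$, $E_6$, $E_7$ the facet types are classical. The facets of $D_5$ are the $16$ demicube-type facets (the $5$-cell $\alpha_4$ is the vertex figure family) together with the cross-polytope facets. $E_6$ has the $2_{21}$-type facets $\beta_5$ and $\alpha_5$, and $E_7$ has $\alpha_6$ and $\beta_6$; in the Gosset language these are the cells of $1_{21}$, $2_{21}$, $3_{21}$ contact polytopes. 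So the plan is to build a reference list of candidate shapes and then match every facet of $P^{*}$ against it up to congruence. The candidates are all facets of $P_0$, together with all pieces obtained from them by repeated vertex-figure splits.

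\emph{Step 1: the reference list.} For each facet type $F$ of $P_0$, enumerate its vertex-figure splits. There is one for each vertex up to $\Aut F$. Each split produces a pyramid over the vertex figure and a diminished cell. Iterate: apply splits to the diminished cells (and pyramids) at vertices not yet removed. Keep only pieces whose edges are all unit, since a facet of $P^{*}$ must be unit-edge. Record each piece by a congruence invariant, namely its sorted Gram matrix of vertex vectors modulo vertex permutation. In practice this is a canonical form of the edge-labelled complete graph on its vertices, with labels the squared distances. The iteration terminates because vertex counts strictly decrease. Membership is also easy to test directly: a unit-edge convex set of points is congruent to a piece of $F$ exactly when its point set is isometric to a subset $S\subseteq F$ that is cut out by a sequence of vertex-figure hyperplanes.

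\emph{Step 2: match the facets of $P^{*}$.} For each configuration, take one facet representative per $\Aut V$-orbit; Table~\ref{tab:all} says there are at most five types. Compute its distance matrix in exact arithmetic, then search for an isometric embedding into the reference list. This is a small graph-isomorphism problem on at most a few dozen vertices. Facets on external hyperplanes are unit-edge facets of $\conv V$ by Theorem~\ref{thm:creased}, and I expect them to match facets of $P_0$ or pyramids such as simplices, directly. The folds are obtuse cells. By the discussion of shallow folds, each fold is cut by a flat through the circumcenter of $V\cap H$, and $V\cap H$ should itself be congruent to a $k_{21}$-type facet of $P_0$. The fold is then the diminished piece (or the pyramid) of an equatorial split.

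\emph{Main obstacle.} The hardest case is the facets that need more than one split, as flagged by "a facet may be split more than once." For these the fold region is not a single half of $V\cap H$ determined by one vertex figure. The remedy is to recognize the fold as an intersection of several half-spaces, each bounded by the vertex-figure hyperplane of a different vertex of the ambient $P_0$-facet. I would first identify $V\cap H$ with a facet of $P_0$ via its Gram matrix. I would then read off which vertices of that facet are missing from the fold. Finally I would check that each missing vertex $v$ is separated from the fold exactly by the hyperplane through the neighbours of $v$. A fallback is needed if $V\cap H$ is not itself a full $P_0$-facet, as may happen for $L_5$ and $R_5$, which are not obviously related to $D_5$ by a single facet type. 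In that case I would search among all facets of $P_0$ for an isometric copy containing the given cell, then run the same missing-vertex test.
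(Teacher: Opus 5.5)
Your overall strategy is how the paper proves this theorem: store the facets of $P_0$ and their (possibly repeated) vertex-figure split pieces, then match each facet of $P^{*}$ by Gram matrix up to relabeling. The paper stores only the named cells of Tables~\ref{tab:n5}, \ref{tab:n6}, and~\ref{tab:n7} rather than generating every iterated piece, which is immaterial. As written, however, two concrete errors would make your check fail.

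First, you have taken the contact polytopes to be $1_{21}$, $2_{21}$, $3_{21}$, on $16$, $27$, $56$ vertices, and listed their facets. The actual contact polytopes have $40$, $72$, $126$ vertices:
\begin{itemize}
\item the $D_5$ one is the rectified $5$-orthoplex, with $32$ rectified $5$-cells ($0_{21}$) and $10$ $16$-cells;
\item the $E_6$ one is $1_{22}$, with $54$ $5$-demicubes ($1_{21}$), not $\beta_5$ and $\alpha_5$;
\item the $E_7$ one is $2_{31}$, with $576$ $6$-simplices and $56$ copies of $2_{21}$, not $\beta_6$.
\end{itemize}
The folding facets are therefore $0_{21}$, $1_{21}$, $2_{21}$. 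With your list ($\alpha_k$ and $\beta_k$) every available split is an unsplittable simplex or an equatorial orthoplex split. So no split in your reference list can fold, yet six of the seven polytopes have folds. The rectified $5$-cell, the $5$-demicube, $2_{21}$ and all their pieces are missing, so most facets would match nothing.

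Second, your picture of the folds is wrong. An equatorial split has $R_F^{2}=\tfrac12$, hence $\beta=0$ by Theorem~\ref{thm:vfsplit}: its pieces rejoin flat and never create a fold. The folds here are the pyramids of the non-equatorial splits of the $k_{21}$ facets. The flat in the definition of a shallow fold is not the vertex-figure cut; for these folds it only witnesses that the pyramid is obtuse, via $\langle c,v\rangle = 1-R_F^{2}\le\tfrac12$ (Section~\ref{sec:crease}).

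Moreover, every facet of these seven polytopes is the whole of $V\cap H$. External facets are by creasedness, and each fold is a whole intersection. So the removed vertices of a split piece never lie on its hyperplane. The first step of your main-obstacle procedure — identifying $V\cap H$ with a facet of $P_0$ and reading off the missing vertices — therefore fails on every split piece, not only for $L_5$ and $R_5$. What you call the fallback, embedding the cell isometrically into an abstract copy of the $P_0$ facet, is the only route, and it is just your Steps 1--2. Repeated splits add no extra difficulty: the $2$-diminished $5$-demicube is simply one more stored cell.
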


\paragraph{Organization.}
Section~\ref{sec:n5} works through the five-dimensional configurations in turn and
records their facet censuses in Table~\ref{tab:n5}.
Section~\ref{sec:splits} analyzes the geometry of vertex-figure splits and
derives the fold angle.
Section~\ref{sec:n6} and Section~\ref{sec:n7} then apply the same procedure to
the six- and seven-dimensional cases.
Section~\ref{sec:crease} takes up the creased definition.
Theorems~\ref{thm:exist}--\ref{thm:parts} are proven in Section~\ref{sec:method}, where the
same procedure is applied to every configuration.

The vertex counts and facet censuses of the configurations
$D_5$, $E_6$, and $E_7$ are
classical~\cite{Coxeter1973,Gosset1900,Elte1912}; all others reported below,
including the convex hulls, are outputs of the procedure of
Section~\ref{sec:method}, and we do not attribute them individually.

\FloatBarrier

\section{Dimension $5$}\label{sec:n5}

\paragraph{\boldmath $L_5$.}
The $D_5$ contact polytope is the rectified $5$-orthoplex, consisting of 32 rectified
$5$-cells and 10 $16$-cells ($4$-orthoplexes).
The 16-cell facets can be split equatorially into two octahedral pyramids and
$L_5$ is obtained by splitting eight of the 10 $16$-cells into 16 octahedral pyramids
and reassembling the pieces. None of the pyramids are joined with another pyramid, since this would
recreate the original $16$-cells.

This construction is analogous to how the facets of the cuboctahedron can be reassembled to form the anticuboctahedron
($J_{27}$~\cite{Johnson1966}), except that some of $D_5$'s facets are split first.
In three dimensions there are no smaller unit-edge facets to split the cuboctahedron's facets into.
Since $L_5$ is already unit-edge, it follows that $L_5^{*} = L_5$, which is
convex.

\begin{table}[!ht]
\centering
\begin{tabular}{lrrrrrr}
\toprule
Facets & $D_5$ & $L_5$ & $Q_5^{*}$ & $Q_5$ & $R_5^{*}$ & $R_5$ \\
\midrule
$16$-cell ($4$-orthoplex)        & 10 &  2 & 10 & 10 &  2 &   2 \\
octahedral pyramid               & -- & 16 & -- & -- & 16 &  16 \\
rectified $5$-cell               & 32 & 32 & 12 & 12 & 12 &  12 \\
diminished rectified $5$-cell    & -- & -- & 20 & 20 & 20 &  20 \\
pyramid of triangular prism      & -- & -- & 20 & -- & 20 &  -- \\
\midrule
non-unit-edge facets             & -- & -- & -- & 50 & -- &  50 \\
\quad types                      & -- & -- & -- &  2 & -- &   2 \\
\midrule
Total facets                     & 42 & 50 & 62 & 92 & 70 & 100 \\
\midrule
Unit edges                       & 240 & 240 & 240 & 240 & 240 & 240 \\
Edges of length $\sqrt{8/5}$     &  -- &  -- &  -- &  10 &  -- &  10 \\
\bottomrule
\end{tabular}
\caption{Facets and edges in dimension $5$.}
\label{tab:n5}
\end{table}

\paragraph{\boldmath $Q_5$.} The construction of $Q_5$ proceeds similarly, but with a different result.
In this case it is the rectified $5$-cell facets of $D_5$ that are split rather than the $16$-cells.
The split is not equatorial,
so the two resulting pieces are distinct: a diminished rectified $5$-cell and a pyramid over a triangular prism.

Because the split produces two types of pieces, they can be joined in two new ways
in addition to reconstituting the original rectified $5$-cell.
Two pyramids can join along their bases or two diminished cells can meet along the facet that was produced by the split.
Joining two pyramids produces a reflex ridge and joining two diminished cells produces a convex one (Section~\ref{sec:splits}).

The unit-edge polytope $Q_5^{*}$ is obtained by splitting 20 of $D_5$'s 32 rectified $5$-cells
and joining the 20 resulting pyramids to one another in pairs, creating 10
reflex ridges. The 20 diminished cells pair up in the same way, creating 10
convex ridges.
As a result, $Q_5^{*}$ is not convex and $Q_5^{*} \neq Q_5$.

The contact polytope $Q_5$ is the convex hull of $Q_5^{*}$.
It consists of 42 of $Q_5^{*}$'s 62 facets that lie on external hyperplanes, together with 50
new facets that span where the pyramids fold inward, each with one non-unit
edge of length $\sqrt{8/5}$.

\paragraph{\boldmath $R_5$.} The remaining case results when both types of $D_5$'s facets are split.
The transformations taking $D_5 \to L_5$ and $D_5 \to Q_5^{*}$ are applied
concurrently, and $R_5^{*}$'s census is their combination.
Like $Q_5^{*}$, $R_5^{*}$ is not convex and $R_5^{*} \neq R_5$.
Table~\ref{tab:n5} shows the facet censuses.

\FloatBarrier

\section{Vertex-figure splits}\label{sec:splits}

The pieces of a vertex-figure split can be rejoined in three ways, as shown
in Figure~\ref{fig:hinge}. A pyramid and a diminished cell recreate the
original cell. Two diminished cells meet at a convex ridge and two pyramids
meet at a reflex ridge, where the surface folds inward. The fold has the
same magnitude either way, $2\beta$, and it is determined solely by the circumradius of
the cell that was split.

\begin{figure}[H]
\centering
\begin{tikzpicture}[scale=2.05,line join=round,line cap=round]
  \tikzset{
    arm/.style={line width=1.5pt,black!62},
    ref/.style={draw=black!35,line width=0.5pt,dash pattern=on 2.2pt off 1.8pt},
    body/.style={fill=black!8,draw=none},
    ang/.style={draw=black!55,line width=0.5pt},
    lab/.style={font=\scriptsize,align=center,inner sep=1pt},
  }
  \begin{scope}
    \fill[body] (0,0) -- (180:0.26)
       arc[start angle=180,end angle=360,radius=0.26] -- cycle;
    \draw[ang] (180:0.26) arc[start angle=180,end angle=360,radius=0.26];
    \draw[arm] (0,0) -- (180:0.6325);
    \draw[arm] (0,0) -- (0:0.6325);
    \fill (0,0) circle (1.5pt);
    \node[font=\small] at (0,-0.46) {$\pi$};
    \node[font=\footnotesize] at (0,-0.66) {original cell};
  \end{scope}
  \foreach \dx/\la/\ra/\wa/\wb/\bmr/\note/\what in {%
      2.30/194.4775/-14.4775/194.4775/345.5225/-7.24/{$\pi-2\beta$}/{convex ridge},
      4.60/165.5225/14.4775/165.5225/374.4775/7.24/{$\pi+2\beta$}/{reflex ridge}}{
    \begin{scope}[shift={(\dx,0)}]
      \fill[body] (0,0) -- (\wa:0.26)
         arc[start angle=\wa,end angle=\wb,radius=0.26] -- cycle;
      \draw[ang] (\wa:0.26) arc[start angle=\wa,end angle=\wb,radius=0.26];
      \draw[ref] (-0.74,0) -- (0.74,0);
      \draw[arm] (0,0) -- (\la:0.6325);
      \draw[arm] (0,0) -- (\ra:0.6325);
      \node[font=\scriptsize] at (\bmr:0.74) {$\beta$};
      \fill (0,0) circle (1.5pt);
      \node[font=\small] at (0,-0.46) {\note};
      \node[font=\footnotesize] at (0,-0.66) {\what};
    \end{scope}
  }
  \node[lab] at (-0.52,0.34) {diminished\\cell};
  \node[lab] at ( 0.52,0.34) {pyramid};
  \node[lab] at ( 1.78,0.34) {diminished\\cell};
  \node[lab] at ( 2.82,0.34) {diminished\\cell};
  \node[lab] at ( 4.08,0.34) {pyramid};
  \node[lab] at ( 5.12,0.34) {pyramid};
\end{tikzpicture}
\caption{Cross sections of the three ways that the two pieces of a split facet
can be joined.}
\label{fig:hinge}
\end{figure}

\begin{theorem}[Vertex-figure split]\label{thm:vfsplit}
Let $F$ be a unit-edge polytope inscribed in $\Sph^{n-1}$ with circumcenter
$c$ and circumradius $R_F$, whose affine hull does not meet the center,
whose vertices lie in a kissing configuration, and in which every pair of
vertices at distance $1$ is an edge. Let $v$ be a vertex of $F$, $G$ its
vertex figure with circumradius $R_G$, and $\Pi$ the pyramid over $G$. Then
\begin{enumerate}
\item the cut is along $\{\, x : \langle x, v \rangle = \tfrac12 \,\}$, and
  $\langle c, v \rangle = 1 - R_F^{2}$;
\item $\displaystyle R_G^{2} = 1 - \frac{1}{4R_F^{2}}$;
\item two pieces joined along $G$ are mirror images in the hyperplane
  through $\operatorname{aff}(G)$ and the center, and meet at a fold of
  $2\beta$ with $\sin\beta = \lvert\, 2R_F^{2} - 1 \,\rvert$, whether the
  pieces are two pyramids or two diminished cells;
\item when two pyramids are so joined, the apex $v'$ of the second has
  $\langle v, v'\rangle = 2R_F^{2} - 1 = \sin\beta$ and
  $\lvert v - v' \rvert^{2} = 4(1 - R_F^{2})$.
\end{enumerate}
The split is equatorial exactly when $R_F^{2} = \tfrac12$, and then
$\beta = 0$.
\end{theorem}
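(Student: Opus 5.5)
The plan is to reduce everything to linear algebra in the $2$-plane through the center orthogonal to $\operatorname{aff}(G)$. Throughout I write $R=R_F$, $a=1-R^2$, and $u=(v-c)/R$.

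\emph{Step 1 (the cut, and $\langle c,v\rangle$).} Every vertex $w$ of $F$ lies on $\Sph^{n-1}$. Hence $|w-v|=1$ if and only if $\langle w,v\rangle=\tfrac12$.
\begin{itemize}
\item By hypothesis the pairs at distance $1$ are exactly the edges. So the neighbours of $v$, which are the vertices of $G$, lie on $\{x:\langle x,v\rangle=\tfrac12\}$.
\item Every other vertex $w\neq v$ has $\langle w,v\rangle<\tfrac12$, since it is not at distance $1$ and $V$ is a kissing configuration. So this hyperplane separates $v$ from the remaining vertices, and it is the cut.
\item Because all vertices are equidistant from the origin, the circumcenter $c$ is the orthogonal projection of the origin onto $\operatorname{aff}(F)$. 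The projection exists and is nonzero because $\operatorname{aff}(F)$ misses the center.
\item Therefore $|c|^2=1-R^2$ and $v-c\perp c$, which gives $\langle c,v\rangle=|c|^2=1-R^2$.
\end{itemize}

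\emph{Step 2 ($R_G$).} Let $g$ be the point of the line $cv$ with $\langle g,v\rangle=\tfrac12$. Writing $g=c+sRu$ and using Step 1 gives $a+sR^2=\tfrac12$, so
\[
s=\frac{R^2-\tfrac12}{R^2}.
\]
\begin{itemize}
\item By symmetry about the axis $cv$ inside $\operatorname{aff}(F)$, $g$ is equidistant from all vertices of $G$.
\item $G$ is the vertex figure of a polytope, so it spans $\operatorname{aff}(F)\cap\{\langle x,v\rangle=\tfrac12\}$. Hence $g$ is the circumcenter of $G$.
\item A vertex $w$ of $G$ lies on the sphere of radius $R$ about $c$ and at distance $1$ from $v$. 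If $\theta=\angle wcv$, then $2R\sin(\theta/2)=1$, and
\[
R_G=R\sin\theta=\cos(\theta/2)=\sqrt{1-\tfrac1{4R^2}}.
\]
\item Equivalently $|g|^2=1-R_G^2=1/(4R^2)$. This identity is reused in Step 3.
\end{itemize}

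\emph{Step 3 (the fold).} Reflection $\sigma$ in the hyperplane $M=\operatorname{span}(\operatorname{aff}(G),0)$ preserves the unit sphere and fixes $G$ pointwise. So it maps either piece to a congruent piece glued along $G$ with its vertices still on $\Sph^{n-1}$; this gives the mirror-image claim.
\begin{itemize}
\item All the relevant geometry lives in the plane $\operatorname{span}(g,u)$, which is orthogonal to the direction space of $\operatorname{aff}(G)$. In the original cell the pieces are collinear in this cross section, so the ridge angle is $\pi$.
\item After reflecting one piece, the deviation from straight is $2\beta$. Here $\beta$ is the angle between $u$ and the normal of $M$ in this plane, i.e.\ $\sin\beta=\lvert\cos\angle(u,g)\rvert$.
\item Compute $\langle g,u\rangle=\langle c,u\rangle+sR=sR$ and $|g|=1/(2R)$. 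Then
\[
\cos\angle(u,g)=\frac{sR}{1/(2R)}=2R^2-1 .
\]
\item The same computation applies whether one reflects the pyramid or the diminished cell; only the side of the fold changes. Which side is convex and which reflex follows from which piece contains $c$, or equivalently from the sign of $2R^2-1$.
\end{itemize}

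\emph{Step 4 (apexes) and the equatorial case.} Let $N$ be the unit normal of $M$ in the cross-section plane. Since $v=g+(1-s)Ru$ and $g\in M$, we get $\langle v,N\rangle=(1-s)R\cos\beta=\cos\beta/(2R)$. Then $v'=v-2\langle v,N\rangle N$ and
\[
\langle v,v'\rangle=1-\frac{\cos^2\beta}{2R^2}=2R^2-1,
\]
using $\cos^2\beta=1-(2R^2-1)^2=4R^2-4R^4$. This also gives $|v-v'|^2=2-2\langle v,v'\rangle=4(1-R^2)$. Finally, the cut passes through $c$ exactly when $\langle c,v\rangle=\tfrac12$, i.e.\ $R^2=\tfrac12$, and then $\sin\beta=0$.

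\emph{Main obstacle.} The computations are routine. The delicate point is Step 3's identification of the fold with a $2$-dimensional angle. This needs the cross section orthogonal to $\operatorname{aff}(G)$ to actually contain $0$, $c$, $g$ and $v$; I would justify it from $c-g\parallel u$ and $g\perp\operatorname{aff}(G)-g$. It also needs a consistent sign convention for convex versus reflex. I would fix orientation by the outward side, i.e.\ away from the origin.
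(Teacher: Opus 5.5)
Your proof is correct and follows essentially the paper's route: the circumcenter is the foot of the perpendicular from the center, one reflects in the hyperplane $M$ through $\operatorname{aff}(G)$ and the center, and the fold and apex computations are done in the $2$-plane orthogonal to $\operatorname{aff}(G)$. The differences are only in the computations: you get $R_G$ from the chord angle and $\sin\beta$ from $\langle g,u\rangle$, where the paper uses the distance from $c$ to the cut and the right triangle on the center, $c$, $c_G$ with $\cos\beta = h_F/h_G$, and it obtains $\langle v,v'\rangle$ from the projection $c_G/(2h_G^{2})$ of $v$ onto $M$ rather than from $\langle v,N\rangle$.
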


\begin{proof}
The vertices are unit vectors, so $\lvert x - v \rvert = 1$ is
$\langle x, v \rangle = \tfrac12$, and the kissing condition
$\langle x, v \rangle \le \tfrac12$ puts every vertex not in $G$ strictly
below the cut. Since $\lvert p \rvert^{2} = 1$ for every vertex $p$, the
foot of the perpendicular from the center to $\operatorname{aff}(F)$ is
equidistant from the vertices, so it is $c$; thus
$h_F^{2} = \lvert c \rvert^{2} = 1 - R_F^{2}$, where $h_F$ is the distance
from the center to $\operatorname{aff}(F)$, and
$\langle p, c \rangle = h_F^{2}$ for every vertex $p$, which at $p = v$ is
the rest of (i). The same holds for $G$: its circumcenter $c_G$ is the foot
of the perpendicular to $\operatorname{aff}(G)$, at distance $h_G$ with
$h_G^{2} = 1 - R_G^{2}$.

On $\operatorname{aff}(F)$ the functional
$\langle x, v\rangle = \langle x - c, v - c\rangle + \langle c, v\rangle$
has norm $R_F$, so the cut lies at distance
$d = \lvert \tfrac12 - (1 - R_F^{2}) \rvert / R_F$ from $c$, and
$R_G^{2} = R_F^{2} - d^{2} = 1 - 1/(4R_F^{2})$, which is (ii); hence
$h_G^{2} = 1/(4R_F^{2})$.

Let $M$ be the hyperplane through $\operatorname{aff}(G)$ and the center.
Reflection in $M$ fixes $G$ pointwise and preserves the sphere, so it
carries either piece to a congruent unit-edge piece inscribed in the same
sphere and meeting it along $G$. The triangle on the center, $c$, and $c_G$
is right-angled at $c$, so the normals $c$ and $c_G$ of the two affine
hulls meet at the angle $\beta$ with $\cos\beta = h_F/h_G$, and the
reflection turns $\operatorname{aff}(F)$ about $\operatorname{aff}(G)$
through $2\beta$. So
$\sin^{2}\beta = 1 - h_F^{2}/h_G^{2} = 1 - 4R_F^{2}(1 - R_F^{2})
 = (2R_F^{2} - 1)^{2}$, which is (iii).

For (iv), $v'$ is the reflection of $v$ in $M$, so
$\langle v, v' \rangle = 1 - 2\,\mathrm{dist}(v, M)^{2}$. The projection
of $v$ onto $M$ is $c_G/(2h_G^{2})$: that point lies in $M$ and has inner
product $\tfrac12$ with every point of $\operatorname{aff}(G)$, since
$\langle c_G, x \rangle = h_G^{2}$ there, as $v$ does; and
$\operatorname{aff}(G)$ spans $M$, so those inner products determine the
projection. Its squared length is $1/(4h_G^{2}) = R_F^{2}$, so
$\mathrm{dist}(v, M)^{2} = 1 - R_F^{2}$,
$\langle v, v' \rangle = 2R_F^{2} - 1$, and
$\lvert v - v' \rvert^{2} = 2 - 2\langle v, v' \rangle = 4(1 - R_F^{2})$.
Finally $d = 0$ exactly when $R_F^{2} = \tfrac12$, and then $h_G = h_F$
and $\beta = 0$.
\end{proof}

\paragraph{The lattice facets.} Table~\ref{tab:cuts} lists the facets of the lattice contact polytopes
from dimension 4 through 8 and the splits they admit. The
simplices cannot be split because the pyramid would be the whole cell.
The orthoplexes have $R^{2} = \tfrac12$,
so they only split equatorially and their pieces rejoin flat.
The remaining three facet types can fold: the rectified $5$-cell, the $5$-demicube
and the Gosset $2_{21}$. So among the lattice facets from dimension 4
to 8, a split can fold only in dimensions 5, 6, and 7.

\begin{table}[!ht]
\centering\footnotesize
\setlength{\tabcolsep}{3.5pt}
\renewcommand{\arraystretch}{1.25}
\begin{tabular}{llllccc}
\toprule
Config.\ & Facet $F$ & Vertex figure $G$ & Split pieces
    & $R_F^{2}$ & $\sin\beta$ & $\ell^{2}$ \\
\midrule
$D_4$ & octahedron & square & equatorial & $1/2$ & $0$ & --- \\
\midrule
\multirow{2}{*}{$D_5$}
  & $16$-cell & octahedron & equatorial & $1/2$ & $0$ & --- \\
  & rectified $5$-cell ($0_{21}$) & triangular prism ($-1_{21}$)
    & diminished $+$ pyramid & $3/5$ & $1/5$ & $8/5$ \\
\midrule
$E_6$ & $5$-demicube ($1_{21}$) & rectified $5$-cell ($0_{21}$)
  & diminished $+$ pyramid & $5/8$ & $1/4$ & $3/2$ \\
\midrule
\multirow{2}{*}{$E_7$}
  & $6$-simplex & $5$-simplex & none & \multicolumn{3}{c}{---} \\
  & Gosset $2_{21}$ & $5$-demicube ($1_{21}$) & diminished $+$ pyramid
    & $2/3$ & $1/3$ & $4/3$ \\
\midrule
\multirow{2}{*}{$E_8$}
  & $7$-simplex & $6$-simplex & none & \multicolumn{3}{c}{---} \\
  & $7$-orthoplex & $6$-orthoplex & equatorial & $1/2$ & $0$ & --- \\
\bottomrule
\end{tabular}
\caption{The facets of the lattice contact polytopes and the splits they
admit. The pieces of a folding split meet at a fold of $2\beta$, and
$\ell$ is the length of the non-unit edge between the two apexes of a
reflex ridge.}
\label{tab:cuts}
\end{table}

\paragraph{\boldmath The $k_{21}$ series.} The three folding facets are consecutive members of the series $k_{21}$,
each vertex figure being the previous member $(k-1)_{21}$, and for this
series the fold angle has a closed form. The circumradii of the Gosset
polytopes are~\cite{Coxeter1988}
\[
  R^{2}(k_{21}) \;=\; \frac{6-k}{2(5-k)},
  \qquad n = k + 5,
\]
so by Theorem~\ref{thm:vfsplit}
\[
  \sin\beta \;=\; 2R^{2}(k_{21}) - 1 \;=\; \frac{1}{5-k} \;=\; \frac{1}{10-n}.
\]

\begin{corollary}\label{cor:angle}
A vertex-figure split of the $k_{21}$ facet of the lattice contact polytope
in dimension $n = k+5$ folds through $2\beta$ with $\sin\beta = 1/(10-n)$,
and the two apexes of the reflex ridge it creates have inner product
$1/(10-n)$.
\end{corollary}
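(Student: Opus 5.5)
The corollary should follow by specializing Theorem~\ref{thm:vfsplit} to the $k_{21}$ facets, so the plan is mainly to check the hypotheses and then substitute the circumradius. First I verify that the $k_{21}$ facet $F$ of the lattice contact polytope in dimension $n=k+5$ meets the hypotheses of Theorem~\ref{thm:vfsplit}. The facet is unit-edge and inscribed in $\Sph^{n-1}$, since the lattice contact polytope is unit-edge by Theorem~\ref{thm:exist}. Its affine hull misses the center because it is a facet of a convex polytope with the center in its interior. Its vertices lie in the lattice kissing configuration.

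The remaining hypothesis is that every pair of vertices at distance $1$ is an edge of $F$. I would get this from Theorem~\ref{thm:contacts} for $P^{*}=P_0$: two vertices of $F$ at distance $1$ are a contact, hence an edge of $P_0$. That edge is the segment between two points of $F$, so it lies in the face $F$ and is an edge of $F$. If this hypothesis is not actually needed for parts (iii) and (iv), it can be skipped, but it is cheap to include.

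Next, I substitute the Gosset circumradius $R^{2}(k_{21})=(6-k)/(2(5-k))$. Part (iii) gives
\[
\sin\beta=\lvert 2R_F^{2}-1\rvert=\Bigl\lvert \tfrac{6-k}{5-k}-1\Bigr\rvert=\tfrac{1}{5-k}=\tfrac{1}{10-n}.
\]
The value is positive for $k\in\{0,1,2\}$, so the absolute value is harmless. Part (iv) gives the apex inner product $\langle v,v'\rangle=2R_F^{2}-1$, which is the same quantity, $1/(10-n)$. As a cross-check, I would compare these values with the $R_F^{2}$ column of Table~\ref{tab:cuts}: $3/5$, $5/8$ and $2/3$ yield $1/5$, $1/4$ and $1/3$.

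The main obstacle is confirming the circumradius formula in the paper's normalization, where edges have length $1$ and the vertices lie on the unit sphere. I would check it directly in each of the three cases. For $0_{21}$, the rectified $5$-cell, the facet with unit edge has $R^{2}=3/5$. For $1_{21}$, the $5$-demicube with edge $1$ is a half-cube of side $1/\sqrt2$, so $R^{2}=5\cdot\tfrac18=5/8$. For $2_{21}$ with unit edge, $R^{2}=2/3$ is standard. These three checks justify the closed form for the only values $k=0,1,2$ at issue.
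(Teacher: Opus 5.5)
Your proposal is correct and follows the paper's route. The paper takes Coxeter's circumradius $R^{2}(k_{21}) = (6-k)/(2(5-k))$, substitutes it into part (iii) of Theorem~\ref{thm:vfsplit} to get $\sin\beta = 2R^{2}-1 = 1/(5-k) = 1/(10-n)$, and reads the apex inner product off part (iv); your explicit checks of the theorem's hypotheses and of the values $3/5$, $5/8$, $2/3$ are sound additions that the paper leaves to the citation and Table~\ref{tab:cuts}.
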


So at $n = 5$, $6$, and $7$ the folds have $\sin\beta = 1/5$, $1/4$,
and $1/3$, and the inner product between
the two apexes is the same number. The lattice configurations have inner
products $0$, $\pm\tfrac12$, and $\pm1$ only, so a fold introduces exactly
one new inner product among the edges of the contact polytope, which is
the last column of Table~\ref{tab:all}.

\FloatBarrier

\section{Dimension $6$}\label{sec:n6}

The three six-dimensional configurations continue the pattern of dimension
5: facets of the lattice contact polytope are split, and the pieces are
joined in pairs creating reflex and convex ridges.

The contact polytope of $E_6$ is the Gosset $1_{22}$, whose 54 facets are
all $5$-demicubes.
The $5$-demicube splits into a diminished $5$-demicube and a pyramid over a
rectified $5$-cell.

The difference between $C6\text{-}72b$, $C6\text{-}72c$, and $C6\text{-}72d$ is how
many of the 54 facets are split, and how many times. A facet may be split twice:
a second vertex is removed from the diminished cell, exposing a second
pyramid and a twice-diminished cell. $C6\text{-}72b$ splits 32 facets once; $C6\text{-}72c$ splits 48
twice; $C6\text{-}72d$ splits 32 once and 16 twice.

All three unit-edge polytopes are non-convex, and the non-unit edges of
$P$ all have the same length, $\sqrt{3/2}$, at inner product $1/4$.
Table~\ref{tab:n6} shows the facet censuses.

\begin{table}[h]
\centering
\setlength{\tabcolsep}{3.8pt}
\begin{tabular}{lrrrrrrr}
\toprule
Facets & $E_6$ & $C6\text{-}72b^{*}$ & $C6\text{-}72b$ & $C6\text{-}72c^{*}$ & $C6\text{-}72c$
 & $C6\text{-}72d^{*}$ & $C6\text{-}72d$ \\
\midrule
$5$-demicube                       & 54 & 22 &  22 &  6 &   6 &  6 &   6 \\
diminished $5$-demicube            & -- & 32 &  32 & -- &  -- & 32 &  32 \\
$2$-diminished $5$-demicube        & -- & -- &  -- & 48 &  48 & 16 &  16 \\
pyramid of rectified $5$-cell       & -- & 32 &  -- & 96 &  -- & 64 &  -- \\
\midrule
non-unit-edge facets                & -- & -- & 160 & -- & 448 & -- & 304 \\
\quad types                         & -- & -- &   2 & -- &   3 & -- &   3 \\
\midrule
Total facets                        & 54 & 86 & 214 &150 & 502 &118 & 358 \\
\midrule
Unit edges                          &720 &720 & 720 &720 & 720 &720 & 720 \\
Edges of length $\sqrt{3/2}$        & -- & -- &  16 & -- &  48 & -- &  32 \\
\bottomrule
\end{tabular}
\caption{Facets and edges in dimension $6$.}
\label{tab:n6}
\end{table}

\FloatBarrier

\section{Dimension $7$}\label{sec:n7}

The contact polytope of $E_7$ is the Gosset $2_{31}$, composed of
576 $6$-simplices and 56 Gosset $2_{21}$ facets. Only the $2_{21}$ facets can be split, not the simplices.
Table~\ref{tab:n7} collects the facet censuses.

\paragraph{\boldmath $C7\text{-}126c$.} $C7\text{-}126c$ continues the pattern of dimensions 5 and 6: splitting 54 of the 56 $2_{21}$ facets and reassembling
them in pairs gives $C7\text{-}126c^{*}$, with $\sin\beta = 1/3$ and
new edge length $\sqrt{4/3}$.

\paragraph{\boldmath $C7\text{-}126b$.} $C7\text{-}126b$ is quite different
from the previous cases. It does have a
unit-edge polytope, but this cannot result from splitting and
reassembling the facets of $E_7$ because $C7\text{-}126b$ has fewer simplices than $E_7$ does,
and simplices cannot be split. $C7\text{-}126b$ also has non-unit edges of
length $\sqrt{3/2}$ (inner product $1/4$), which is the value of a
split in dimension 6, not 7.

The explanation is that $E_7$ contains 28 equatorial cross
sections that are copies of $E_6$, and in $C7\text{-}126b$ 12 of them
are transformed to the shape of $C6\text{-}72b$. So $C7\text{-}126b$ exhibits
the same splitting and reassembly pattern performed on the facets of 12
of the 6-dimensional equators of $E_7$ rather than on the facets of $E_7$ itself.
This transformation produces two unit-edge facet types that $E_7$ does not
have: a pyramid over a rectified $5$-simplex, and a $2_{21}$ with six
vertices removed (Table~\ref{tab:n7}).

\paragraph{\boldmath $C7\text{-}126d$.} $C7\text{-}126d$ returns to the
original pattern, with the difference that it is obtained by splitting and
reassembling the facets of $C7\text{-}126b$ rather than $E_7$. In all, 54
facets are split: 22 of the 24 Gosset facets and 32
of the 64 $6$-diminished $2_{21}$ facets.
$C7\text{-}126d$ therefore has the non-unit edge lengths associated with splits
in both dimensions 6 and 7, $\sqrt{3/2}$ and $\sqrt{4/3}$.

\begin{table}[!h]
\centering
\setlength{\tabcolsep}{3.0pt}
\begin{tabular}{lrrrrrrr}
\toprule
Facets & $E_7$ & $C7\text{-}126c^{*}$ & $C7\text{-}126c$
 & $C7\text{-}126b^{*}$ & $C7\text{-}126b$ & $C7\text{-}126d^{*}$ & $C7\text{-}126d$ \\
\midrule
$6$-simplex                        & 576 & 576 &  576 &  384 &  384 &  384 &  384 \\
Gosset $2_{21}$                    &  56 &   2 &    2 &   24 &   24 &    2 &    2 \\
diminished $2_{21}$                &  -- &  54 &   54 &   -- &   -- &   22 &   22 \\
pyramid of $5$-demicube           &  -- &  54 &   -- &   -- &   -- &   22 &   -- \\
pyramid of rect.\ $5$-simplex     &  -- &  -- &   -- &   64 &   -- &   64 &   -- \\
$6$-diminished $2_{21}$            &  -- &  -- &   -- &   64 &   -- &   32 &   -- \\
$7$-diminished $2_{21}$            &  -- &  -- &   -- &   -- &   -- &   32 &   -- \\
pyramid of dim.\ $5$-demicube     &  -- &  -- &   -- &   -- &   -- &   32 &   -- \\
\midrule
non-unit-edge facets               &  -- &  -- &  702 &   -- & 1968 &   -- & 2414 \\
\quad types                        &  -- &  -- &    2 &   -- &    4 &   -- &   11 \\
\midrule
Total facets                       & 632 & 686 & 1334 &  536 & 2376 &  590 & 2822 \\
\midrule
Unit edges                         &2016 &2016 & 2016 & 1984 & 1984 & 1984 & 1984 \\
Edges of length $\sqrt{4/3}$       &  -- &  -- &   27 &   -- &   -- &   -- &   27 \\
Edges of length $\sqrt{3/2}$       &  -- &  -- &   -- &   -- &  192 &   -- &  176 \\
\bottomrule
\end{tabular}
\caption{Facets and edges in dimension $7$.}
\label{tab:n7}
\end{table}

\FloatBarrier

\section{Creased polytopes}\label{sec:crease}

A convex polytope inscribed in a sphere has only one kind of facet: each
is on an external hyperplane and contains every vertex on it. A creased
polytope also admits shallow folds: a facet on an internal hyperplane
consisting of every vertex on one closed side of a flat through the cross
section's circumcenter. In both cases the facet contains every point of
$V$ in its region, the whole hyperplane for an external facet and half
of it for a fold.
Jessen's icosahedron (Figure~\ref{fig:jessen}) is
creased: its eight equilateral faces are of the first kind, and its 12
isosceles faces are obtuse triangles, so they are shallow folds.

A fold that is all of $V$ on its hyperplane is shallow exactly when its
cell is obtuse. For the folds of Theorem~\ref{thm:parts} this follows from
Theorem~\ref{thm:vfsplit}: such a fold is a pyramid with apex $v$, so it
lies where $\langle x, v \rangle \ge \tfrac12$, while its circumcenter has
$\langle c, v \rangle = 1 - R_F^{2} \le \tfrac12$ because every split in
Table~\ref{tab:cuts} has $R_F^{2} \ge \tfrac12$.

On all but two of the 12 polytopes every fold contains every point of
$V$ on its hyperplane. But on $C7\text{-}126b^{*}$ and $C7\text{-}126d^{*}$
some folds do not. A fold that omits points is shallow only when
the flat also keeps those points strictly on its other side, so an obtuse
cell is not enough. Some hyperplanes of $C7\text{-}126d^{*}$ carry two
facets, which occupy the two closed sides of one flat through the
circumcenter. The two facets share some vertices on the flat, but these
do not span a ridge.

\FloatBarrier

\section{Proof of Theorems~\ref{thm:exist}--\ref{thm:parts}}\label{sec:method}

The verification is one program, run on each configuration in turn, and
every step of it is integer or rational arithmetic.

\paragraph{Existence.} Each polytope is exhibited as a list of facets,
point indices into the configuration, and checked against the definition
condition by condition:
\begin{enumerate}
\item every facet is unit-edge and spans a hyperplane off the center;
\item every point of $V$ is a vertex of some facet;
\item every ridge lies in exactly two facets, on different hyperplanes,
  and the surface is face-to-face;
\item at every peak the cycle winds once: project onto the orthogonal
  complement of the peak's span, so that each ridge through the peak
  becomes a ray and each facet the wedge between its two rays; the angles
  sum to $2\pi$ exactly when the walk through the rays circles the origin
  once, which is decided by the signs of $2 \times 2$ determinants over
  $\mathbb{Q}$;
\item the facets are connected through their ridges;
\item the polytope is locally connected, the hypothesis of
  Proposition~\ref{prop:star}: the faces of a facet are the intersections
  of its ridges, so every face is a point set, and for each the facets
  containing it are traversed through the ridges containing it;
\item the polytope is creased. Each facet's hyperplane is classified as
  external or internal. On an external hyperplane the facet must be the
  whole intersection. On an internal one there are two cases. When the
  facet is all of $V \cap H$, its cell must be obtuse, which is one
  rational feasibility question: whether the circumcenter is a convex
  combination of the vertices with every weight positive. When the facet
  omits points of $V \cap H$, one of the flats spanned by the
  circumcenter and $n-2$ points of $V \cap H$ must have exactly the facet
  on one closed side, which is decided by enumerating those flats.
\end{enumerate}
Proposition~\ref{prop:star} then makes every polytope star-shaped about
the center. Check 7 is the first sentence of Theorem~\ref{thm:creased}.
It also puts $P^{*}$ in the class of Theorem~\ref{thm:unique}, so the
polytope the enumeration singles out is this one.
The second sentence compares two lists: the facets of $P^{*}$ on external
hyperplanes against the unit-edge facets of $\conv V$ from the hull
enumeration below.

\paragraph{Uniqueness.} Uniqueness is proved in three steps.

\begin{enumerate}
\item Enumerate the candidate facets: every subset of $V$ that can be a
  facet of a creased unit-edge polytope on $V$, starting from those
  through an initial point of $V$ and adding every candidate through
  every ridge of a candidate already found, until no new candidate
  appears.
\item Prune: delete every candidate that has a ridge lying in no other
  surviving candidate on a different hyperplane, and repeat until nothing
  is deleted.
\item Compare the survivors with the facets of $P^{*}$.
\end{enumerate}

Every polytope has a facet through the initial point and is connected
through its ridges, so Step 1 reaches every facet of every creased unit-edge
polytope on $V$. A ridge of a facet lies in exactly one other facet, on a
different hyperplane, so a candidate deleted in Step 2 is a facet of no
polytope, and the survivors still contain every facet of every polytope.
On every configuration the survivors are exactly the facets of $P^{*}$.
Any creased unit-edge
polytope on $V$ then has all its facets among those of $P^{*}$. Every
ridge of $P^{*}$ lies in exactly two of its facets and $P^{*}$ is
connected through its ridges, so a polytope that contains one facet of
$P^{*}$ contains the facet across each of its ridges, and so contains all
of $P^{*}$. It is $P^{*}$. That is Theorem~\ref{thm:unique}.

\paragraph{Enumerating the candidates.} A candidate is a subset of $V$
that spans a hyperplane $H$ off the center, is unit-edge, and is creased:
the whole of $V \cap H$ when $H$ is external, and otherwise the points of
$V \cap H$ on one closed side of an $(n-2)$-flat of $H$ through the
circumcenter of $V \cap H$. The candidates through a face (the initial
point, or a ridge of a candidate) are enumerated hyperplane by
hyperplane. A flat is extended only by a point joined to it by a unit
edge: the edge graph of a facet is connected, so every hyperplane carrying
a facet through the face is reached along such a chain. Hyperplanes
through the center are dropped. On each hyperplane the candidates are
listed and kept when they are unit-edge. On an external hyperplane the
only candidate is the whole intersection. On an internal one the
candidates are the two closed sides of each flat through the circumcenter
spanned by $n-2$ of the hyperplane's points. An automorphism of $V$ is a
permutation of its points that preserves every inner product. Congruent
hyperplanes have congruent sections, so the sections are computed once per
orbit of $\Aut(V)$ and relabeled for the other members of the orbit. In
dimension 7 the candidates through a single point already number in the
tens of thousands ($34{,}828$ for $E_7$).

\paragraph{Coordinates.} We follow Cohn, Jiao, Kumar, and
Torquato~\cite{CJKT2011} in writing a configuration as integer coordinates
with a diagonal quadratic form $d$, so that
$\langle x,y\rangle = \sum_i d_i x_i y_i$ is an integer and the Gram matrix
of the normalized points is rational. The kissing condition, the unit-edge
condition, the circumradii, and the inner products are all decided exactly.
Each configuration is matched against the published coordinates up to a
relabeling (\cite{CJKT2011} for eight of them, \cite{CohnR2024} for $Q_5$
and $R_5$). Because every point
has the same norm, the circumcenter $c$ of a cell satisfies
$\langle p, c\rangle = \langle c,c\rangle = h^{2}$ for every point $p$ of
the cell, and its circumradius is $R^{2} = 1 - h^{2}$. Every entry of
Table~\ref{tab:cuts} comes from this computation.

\paragraph{The convex hulls.} The $P$ columns of the tables come from the
same machinery, with the hull's own condition in place of the unit-edge
one: a flat is extended only when some hyperplane containing it has every
other point of $V$ strictly on one side, which one rational simplex
decides by Gordan's theorem~\cite{Gordan1873}. The levels enumerated are
then the faces of $\conv V$, and the top one its facets.

\paragraph{Naming the facets.} A name in Tables~\ref{tab:n5},~\ref{tab:n6},
and~\ref{tab:n7} denotes a stored cell, and an occurrence is recognized by
matching Gram matrices up to a relabeling. In these names
\emph{$k$-diminished} means $k$ vertices removed, in Johnson's
sense~\cite{Johnson1966}, and a pyramid is named by its base. Every facet
of the seven polytopes of Theorem~\ref{thm:parts} then matches a cell of
the lattice contact polytope in its dimension, whole or split, which
proves that theorem, and every facet of $C7\text{-}126d^{*}$ matches a
facet of $C7\text{-}126b^{*}$, whole or split. On the seven polytopes of
Theorem~\ref{thm:parts}, every fold is a whole intersection and a
pyramid, and the pairs at $1/(10-n)$ are the apex pairs.

\begin{proof}[Proof of Theorem~\ref{thm:contacts}]
Every edge of a unit-edge polytope has length $1$ and is therefore a
contact. For the converse, the edges of $P^{*}$ are compared with the
pairs at inner product $\tfrac12$, and the two sets agree (Table~\ref{tab:all}).
The non-unit edges of $P$ in Tables~\ref{tab:n5},~\ref{tab:n6},
and~\ref{tab:n7} give the second sentence.
\end{proof}

\paragraph{Reproduction.} The whole computation, from the 12
configurations through every check above, is one command,
\texttt{verify\_paper.py}, which prints the entries of every table as it
confirms them. It runs in 35 minutes on a laptop from an empty cache. It
uses the Python standard library alone, with no numerical dependency.
The code, the configurations, the catalog of facet types, and the facets
of each polytope accompany this paper, as integer coordinates and index
lists. The code was written with the assistance of a large language model,
as the Declarations describe.

\FloatBarrier

\section{Conclusion}\label{sec:discussion}

This paper analyzes the 12 known kissing configurations in dimensions 5,
6, and 7 in terms of the polytopes carried by their vertices. That is distinct
from the previous accounts, which describe a configuration by layers, cross
sections, and colorings.
A second description of a configuration gives a second set of questions to
ask about it.

The main contributions are these:

\begin{itemize}
\item All 12 configurations carry a creased polytope with only unit
  edges, and for each it is the only one. Eight of the 12 are
  non-convex and distinct from the contact polytope
  (Theorems~\ref{thm:exist}, \ref{thm:creased}, and~\ref{thm:unique}).

\item The edges of the unit-edge polytope are exactly the contacts of the
  configuration. Every edge joins two touching spheres, and every
  touching pair is an edge (Theorem~\ref{thm:contacts}).

\item Apart from $C7\text{-}126b$ and $C7\text{-}126d$, every facet is a
  facet of the lattice contact polytope in the same dimension, or a piece
  of one cut at a vertex figure. Each polytope is those pieces
  reassembled, which constructs the configuration from the lattice one
  (Theorem~\ref{thm:parts}).

\item A split in dimension $n$ folds to the inner product $1/(10-n)$:
  $1/5$, $1/4$, or $1/3$. The lattice configurations have only $0$,
  $\pm\tfrac12$, and $\pm1$ (Corollary~\ref{cor:angle}).
\end{itemize}

There is also a connection to rigidity. Cohn, Jiao, Kumar, and
Torquato~\cite{CJKT2011} verified that the ten configurations they list
are infinitesimally jammed. For $Q_5$ and $R_5$ the question is open, as
Sz\"{o}ll\H{o}si notes~\cite{Szollosi2023}. Jamming is a property of the
contact graph, and the contact graph is the
$1$-skeleton of $P^{*}$ (Theorem~\ref{thm:contacts}). So for six of the ten,
the object that is
infinitesimally rigid is the skeleton of a non-convex creased polytope,
with its vertices held to the sphere.
Jessen's icosahedron is creased, and it is rigid but infinitesimally
flexible~\cite{Goldberg1978}, so being creased is not enough on its own.
Whether the facets of a creased polytope
can carry a rigidity argument, as the faces of a convex polyhedron do in
Cauchy's theorem~\cite{AignerZiegler2018}, is open.

We do not address here whether there are other unit-edge polytopes that
might lead to new kissing configurations. This is the subject of ongoing work.

The surprise of the introduction can now be answered. It asked how a
search admitting only unit edges could return a configuration that has
non-unit edges. But configurations do not have edges, only polytopes do,
and a single configuration may carry multiple polytopes. $Q_5$'s 40 points
carry two: a unit-edge polytope with 240 edges, and its convex hull with
250. The $\sqrt{8/5}$ belongs to the second.

\FloatBarrier

\appendix
\section{Star-shapedness}\label{app:star}

This appendix supports Section~\ref{sec:method} and depends only on the
definition of a polytope.

The definition constrains the facets around a ridge and around a peak, and
says nothing about the faces below. Call a polytope \emph{locally
connected} when, for every face, the facets containing it are joined by a
chain of facets each meeting the next in a ridge that also contains the
face. This is what the ridge and peak conditions say at the top two levels
(two facets at a ridge, one cycle at a peak), extended to every level
below. It is automatic for a convex polytope, and for the 12 polytopes
of this paper it is checked face by face (Section~\ref{sec:method}).

\begin{proposition}\label{prop:star}
A locally connected polytope is star-shaped about the center of the
sphere: every ray from the center meets it exactly once. In particular it
is homeomorphic to $\Sph^{n-1}$.
\end{proposition}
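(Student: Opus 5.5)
The plan is to show that the radial projection $\rho(x) = x/\lvert x\rvert$ from the center, restricted to the polytope's surface $S$ (the union of its facets), is a homeomorphism onto $\Sph^{n-1}$. Each facet spans a hyperplane that avoids the center, so $\rho$ is defined on it. It maps the facet injectively onto a spherically convex cell: a geodesically convex region inside an open hemisphere.

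The proof is a covering-space argument in three steps.

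\textbf{Step 1: $\rho$ is a local homeomorphism away from low-dimensional faces.} Consider a point of $S$ lying in the relative interior of a facet, or of a ridge. At a ridge there are exactly two facets, and they lie on different hyperplanes. I claim their projected cells lie on opposite sides of the projected ridge. If instead they lay on the same side, then near the ridge the peak cycles at every peak of that ridge would contain a wedge traversed backwards. That would contradict the winding-number-one condition, since the angles could no longer tile the circle once. So near interior and ridge points, $\rho$ is injective and open.

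\textbf{Step 2: extend to points of lower faces.} Let $x$ lie in the relative interior of a face $K$ of dimension at most $n-3$. Let $\Sigma$ be the small sphere in the normal space of $K$. The facets containing $K$ induce on $\Sigma$ a cell complex, the link. I will argue by induction on codimension that $\rho$ maps this link homeomorphically onto the corresponding link sphere on $\Sph^{n-1}$. The base case is a peak, which is the winding-number condition itself.

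For the inductive step, local connectedness says the link is connected through its ridges. By induction, the restriction of $\rho$ to the link is a local homeomorphism in the complement of lower faces, and Step 1 covers the rest. So the map from the link to the link sphere is a covering. When the link sphere has dimension at least $2$ it is simply connected, so the covering is a homeomorphism. When it has dimension $1$, the winding-number condition settles it directly. Hence $\rho$ is a local homeomorphism everywhere on $S$.

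\textbf{Step 3: conclude globally.} $S$ is compact, being a finite union of facets. A local homeomorphism from a compact space onto $\Sph^{n-1}$ is a finite covering map. Its image is open and closed, hence all of the sphere because $S$ is nonempty. $S$ is connected, because the facets are connected through ridges. Since $\Sph^{n-1}$ is simply connected for $n \ge 3$, the covering is one-sheeted. So every ray meets $S$ exactly once, and $S \cong \Sph^{n-1}$.

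\textbf{Main obstacle.} The hardest part is Step 2, making the link induction rigorous. In particular I must show that the link of a face is itself a polytope-like complex to which the inductive hypothesis applies. The two ingredients are that local connectedness gives connectivity of the link, and that the peak winding condition supplies the base case. I would try first to phrase everything through the links directly: the link at $K$ is a finite complex of spherical cells on a sphere of dimension $n-2-\dim K$, in which every codimension-one cell lies in exactly two top cells and the two sit on opposite sides of it. A connected such complex mapping locally homeomorphically to a simply connected sphere is a single sheet. A subsidiary point is the orientation claim at ridges in Step 1, which I would derive from the peak condition applied at the boundary peaks of the ridge.
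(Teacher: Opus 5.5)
Your proposal is correct and is essentially the paper's proof: radial projection is shown to be a local homeomorphism at facet and ridge points (the winding-one peak cycle forces the two facets at a ridge onto opposite sides), then at lower faces by induction on the link using local connectedness, and finally a compact connected cover of the simply connected $\Sph^{n-1}$, $n\ge 3$, is one-sheeted; the paper likewise calls the check that the link inherits the hypotheses routine and omits it. One point is worth stating explicitly, and the paper leaves it implicit as well: run the covering argument on the facets glued along their shared faces rather than on the union $S\subset\mathbb{R}^n$, because local injectivity at a point of a facet presupposes that no other facet passes through that point; injectivity of the one-sheeted cover then shows that $S$ has no self-intersections and meets each ray exactly once.
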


\begin{proof}
Let $\pi$ be radial projection from the center onto $\Sph^{n-1}$. We show
$\pi$ is a homeomorphism from the polytope onto the sphere.

It is a local homeomorphism at every point, for the following reasons.
At a point in the relative interior of a facet, because the facet's
hyperplane misses the center. At a point of a ridge, because the two facets there lie on opposite sides of
the hyperplane through the center and the ridge: projecting along a peak
of the ridge turns the ridge into a ray and the two facets into the wedges
on either side of it, and the cycle at the peak, going round exactly once,
puts them on opposite sides. At a point of a peak, because that cycle
covers each direction once. At a point $x$ in the relative interior of a
face $A$ of lower dimension, by induction on the dimension of the link:
the facets containing $A$ form, around $x$, a complex of dimension
$n - 2 - \dim A$ in which every face one below the top lies in exactly two
top faces (the ridge condition), which is connected through them (local
connectedness at $A$), is locally connected, and whose one-dimensional
links wind once (they are links of peaks of the polytope). So by the same
argument in that dimension its radial projection from $x$ is a
homeomorphism onto a sphere, and the facets containing $A$ cover a
neighborhood of $x$ exactly once. The verification that the link
satisfies the same hypotheses is routine and omitted.

A local homeomorphism is open, and $\pi$ is closed because the polytope is
compact, so $\pi$ is onto the connected sphere. A local homeomorphism from
a compact space onto a connected Hausdorff space is a covering map. The
polytope is connected, being connected through its ridges, and
$\Sph^{n-1}$ is simply connected for $n \ge 3$, so the covering has one
sheet: $\pi$ is a homeomorphism.
\end{proof}

\FloatBarrier

\section*{Statements and Declarations}

\paragraph{Data and code availability.} The verification of
Section~\ref{sec:method} (the code, the 12 configurations, the
catalog of facet types, and the facets of each polytope) accompanies
this paper as ancillary files, and \texttt{verify\_paper.py} reproduces
every table and every check from them. No other data were generated or
analyzed.

\paragraph{Use of large language models.} The author used Claude
(Anthropic) as a coding and writing assistant throughout this work: it
drafted and refactored the verification code and drafted and edited text,
in each case under the author's direction. Every mathematical claim
and every verification result was reviewed by the author, who
takes full responsibility for the content.

\paragraph{Competing interests.} The author has no financial or
non-financial interests related to this work.

\paragraph{Funding.} No funding was received for this work.

\end{document}